\def\?[#1]{\textbf{[#1]}\marginpar{\Large{\textbf{??}}}}
\def\smallsection#1{\smallskip\noindent\textbf{#1}.}
\newtheorem{prop}{Proposition}[section]
\newtheorem{lemm}[prop]{Lemma}
\numberwithin{equation}{section}
\DeclareMathOperator{\ad}{ad}
\DeclareMathOperator{\Res}{Res}
\DeclareMathOperator{\Spec}{Spec}
\DeclareMathOperator{\comp}{comp}
\DeclareMathOperator{\Imag}{Im}
\let\Im=\Imag
\DeclareMathOperator{\rank}{rank}
\let\Re=\Real
\DeclareMathOperator{\supp}{supp}
\DeclareMathOperator{\Vol}{Vol}
\DeclareMathOperator{\WF}{WF}
\def\WFh{\WF_h}
\def\squarebox#1{\hbox to #1{\hfill\vbox to #1{\vfill}}} 
\newcommand{\stopthm}{\hfill\hfill\vbox{\hrule\hbox{\vrule\squarebox
                 {.667em}\vrule}\hrule}\smallskip}
\title[Polynomial bounds on the number of Pollicott--Ruelle
resonances]{Sharp polynomial bounds 
on the number of Pollicott--Ruelle resonances}
\author{Kiril Datchev}
\email{datchev@math.mit.edu}
\address{Department of Mathematics, 77 Massachusetts Avenue, MIT,
Cambridge, MA 02139}
\author{Semyon Dyatlov}
\email{dyatlov@math.berkeley.edu}
\author{Maciej Zworski}
\email{zworski@math.berkeley.edu}
\address{Department of Mathematics, University of California,
Berkeley, CA 94720, USA}
\begin{document}

\begin{abstract}
We give a sharp polynomial bound on the number of Pollicott--Ruelle
resonances. These resonances, which are complex numbers 
in the lower half-plane, appear
in expansions of correlations for Anosov contact flows. 
The bounds follow the tradition of upper bounds on 
the number of scattering resonances and improve a
recent bound of Faure--Sj\"ostrand. The complex scaling
method used in scattering theory is replaced by 
an approach using exponentially weighted spaces
introduced by Helffer--Sj\"ostrand in scattering theory and by
Faure--Sj\"ostrand
in the theory of Anosov flows.
\end{abstract}

\maketitle


\section{Introduction and statement of the results}
\label{in}

Pollicott--Ruelle resonances appear in correlation expansions for certain
chaotic dynamical systems \cite{Po,Ru}.
Recently Faure, Roy, and Sj\"ostrand \cite{f-r-sj,f-sj}
explained how some aspects of Anosov dynamics can be 
analyzed using microlocal methods of scattering theory. 
As an application of that point of view
Faure and Sj\"ostrand \cite{f-sj} proved the
following polynomial upper bound for the number of Pollicott--Ruelle resonances, denoted
$ \Res ( -iV  ) $,  
of a contact Anosov flow, $ \varphi_t = \exp t V $, 
on a $ n$-dimensional compact smooth manifold:
\begin{equation}
\label{eq:b1}
\# \{ \lambda \in \Res ( -iV  ) \; : \;  | \Re \lambda - E | \leq \sqrt E ,
\ \ \Im
\lambda > - \beta \} = o ( E^{ n - \frac12} ) ,
\end{equation}
for any $ \beta $.
See~\cite[Theorem~1.8]{f-sj} for a detailed statement.

In this paper we develop their approach further using 
recent advances in resonance counting \cite{da-dy,sj-n-z,sj-z}. 
This gives the following improvement of \eqref{eq:b1}
\begin{equation}
\label{eq:b2}
\# \{ \lambda \in \Res ( -i V  ) \; : \; | \Re \lambda  - E | \leq \sqrt E  ,
\ \  \Im
\lambda > - \beta \} ={ \mathcal  O } ( E^{\frac{ n} 2} ).
\end{equation}
This estimate is a consequence of an optimal bound
holding in smaller energy intervals \eqref{eq:b31}.

We briefly review the setting referring to \cite[\S 1.1]{f-sj} and 
\cite{b-t}  for
more details and numerous references to earlier works, in particular
in the dynamical systems literature.

Let $ X  $ be a compact smooth manifold of odd dimension 
$ n \geq 3 $, and let $ \varphi_t : X \to X $ be an 
{\em Anosov flow} on $ X $.  We assume that there
exists a {\em contact form} $ \alpha \in
C^\infty ( X , T^*X ) $ compatible with that flow.
This means that for $ E_u ( x ), E_s ( x ) \subset T_x X  $, stable and unstable
subspaces at $ x $,  we have
\begin{equation}
\label{eq:alsu}   \ker ( \alpha ( x ) ) = E_u ( x) \oplus E_s ( x ) \,, \ \  
d \alpha ( x ) |_{ E_s ( x) \oplus E_u ( x ) } \ \text{ is
  nondegenerate.} 
\end{equation}
If $ V \in C^\infty ( X , TX ) $ is the generator of the flow then
$ \alpha ( V ) \neq 0 $ and we can modify $ \alpha $ so that 
$ \alpha ( V ) = 1 $, the assumption we make. In particular,
$ {\mathcal L}_V \alpha = 0 $. 

The volume form on $ X $ is given by 
\[ dx := \alpha \wedge ( d \alpha )^{ \frac{ n-1} 2 } ,\ \ {\mathcal
  L}_V dx = 0 , \]
and
\[ P := {\textstyle {\frac h i}} V , \ \ P : L^2 ( X, dx ) \to L^2 ( X , dx ) ,\]
is a symmetric first order semiclassical differential operator -- see
\cite[\S 14.2]{e-z}. As shown
in \cite[Appendix~A.1]{f-sj} it is essentially self-adjoint. The
addition
of the semiclassical parameter, although trivial, makes the final
argument more natural.

The Pollicott--Ruelle resonances of $ P $, or $ \varphi_t $, are defined
as eigenvalues of $ P$ acting on {\em exponentially weighted
spaces}, $ H_{tG } $, introduced in scattering theory by 
Helffer--Sj\"ostrand \cite{h-sj} and in the context of this paper by 
Faure--Sj\"ostrand \cite{f-sj} (see also Faure--Roy--Sj\"ostrand
\cite{f-r-sj} for an earlier version for Anosov diffeomorphisms). 
The construction of these spaces, denoted by $H^m$ in~\cite{f-sj},
will be reviewed in \S \ref{op} below.
The main point is the following fact given in \cite[Theorem 1.4]{f-sj}:
\begin{gather*}
 P - z : {\mathcal D}_{tG} \to H_{tG} \ \text{ is a
   Fredholm operator for $ \Im z > -th/C , \ t \gg 1 $, } \\
{\mathcal D}_{tG} := \{ u \in H_{t G} : P u \in H_{tG} \}, \text{ with
$ Pu $ defined in the sense of distributions.}
\end{gather*}
By Analytic Fredholm theory (see for instance~\cite[Theorem~D.4]{e-z})
the resolvent $(P-z)^{-1}:H_{tG}\to {\mathcal D}_{tG}$ is meromorphic with
poles of finite rank, which are called Pollicott--Ruelle resonances.
These resonances are independent of
$ t $ and depend only on quantitative properties of the weight $ G$,
see~\cite[Theorem~1.5]{f-sj}. See \S \ref{oop} for some heuristic
ideas behind this construction.

The bound \eqref{eq:b2} is a consequence of a bound in smaller energy 
intervals given in our main result:

\medskip
\noindent
{\bf Theorem.} {\em Let $ X $ be a compact smooth manifold with an
  Anosov flow $ \varphi_t : X \to X $. Let $ P $ be the first order
self-adjoint  operator such that $ i P / h $ is the generator of $
\varphi_t $, and let $ \Res ( P ) $ be the set of resonances
of $ P $. Then for any $ C_0 > 0 $, 
\begin{equation}
\label{eq:b3}
\# \Res ( P ) \cap D ( 1 , C_0 h ) = {\mathcal O } ( h^{- \frac{n-1} 2} ) , 
\end{equation}
where $ D ( z, r ) = \{ \zeta : |\zeta - z | < r \}$.}
\medskip

\noindent
{\bf Remarks} (i) The bound \eqref{eq:b2} was predicted 
in remarks after \cite[Theorem 1.8]{f-sj} 
and is an immediate consequence
of \eqref{eq:b3}. Rescaling $ \lambda = z/h $ we rewrite
\eqref{eq:b3} as 
\begin{equation}
\label{eq:b31}
\# \{ \lambda \in \Res ( -i V  ) \; : \; | \Re \lambda  - E | \leq
1   , \  \Im
\lambda > - \beta \} ={ \mathcal  O } ( E^{\frac{ n-1} 2} ) . 
\end{equation}

\medskip
\noindent
(ii) When $ X = S^*M $ where $ M $ is a compact surface of constant
negative curvature, Pollicott--Ruelle resonances coincide with the zeros
of the Smale zeta function -- see for instance \cite[\S 5.2, Figure
1]{Leb}. Except of a finite number these are then given by 
\[  \lambda =  z  - i ( k + \textstyle{\frac 12 } ) 
 , \  \ \ k \in \mathbb N , \ \ \  z^2  \in \Spec ( - \Delta_M -
{\textstyle \frac 14 }) , \]
where $ \Delta_M  $ is the Laplace-Beltrami operator on $ M$.
In that case the spectral asymptotics \cite{Ber},\cite{Ran} give, for $ \beta > 0
$, 
\[  \# \{ \lambda \in \Res ( P ) \; : \;  | \Re \lambda - E | \leq 1 , \ \Im
\lambda > - \beta \} = [ \beta + {\textstyle{\frac12}} ]  \frac{ \Vol (
  M ) } { \pi} E  + {\mathcal O} \left(\frac E { \log E } \right). \]
In this case $ n = \dim X = 3 $ which shows the optimality of 
\eqref{eq:b3}.

\medskip
\noindent
(iii) In a recent preprint \cite{fat}, Faure and Tsujii consider 
the case of partially hyperbolic diffeomorphism conserving a smooth contact
form and obtain a description of the spectrum in terms of ``bands",
corresponding to fixed values of $ k$ in the example above. 
The asymptotics given in \cite[Theorem 1.19]{fat} are in agreement with 
the upper bound \eqref{eq:b3}.

\section{Outline of the proof}
\label{oop}

The proof of the bound \eqref{eq:b3} is based on combining the 
arguments of \cite{f-sj} with the arguments of
\cite{sj-z},\cite{da-dy}. The paper relies heavily on 
technical results from these earlier works and we provide
specific references in the text. Here we will motivate the 
problem and outline
the general idea of the proof of \eqref{eq:b3}.

The basic analogy between analysis of flows and semiclassical
scattering theory lies in the fact that for a flow $ \varphi_t = 
\exp t V : X \to X $, 
\begin{equation}
\label{eq:qp}
\varphi_t^* u = e^{ i t P / h } u , \ \ u \in C^\infty ( X ) , \ \ \ 
\varphi_t \circ \pi  = \pi \circ \exp t H_p , 
\end{equation}
where $ p ( x, \xi ) = \xi ( V_x )  $ is the symbol of the differential operator $ P $, 
$ \pi : T^* X \to X $ is the canonical projection, $ H_p $ is the 
Hamilton vector field of $ p $, and $\varphi_t^*$ is the pullback operator:
$\varphi_t^*u:=u\circ\varphi_t$.

The key object in scattering theory 
is the trapped set at energy $ E $:
\begin{equation}  K_E = \{  ( x, \xi ) \in p^{-1} ( E ) : \exp ( t H_p ) ( x , \xi )
\not \to \infty , \ \ t \to \pm \infty \}. 
\end{equation}
Here we note that $ \infty $ means fiber infinity $ \xi \to \infty $
in $ T^* X $.  It is the only ``infinity'' in our setting as $ X $ is compact.

Just as in scattering theory the spectrum of the unitary operator $ \exp ( i t P/h ) $ is
the unit circle $ \mathbb S^1 $, so to expand the correlations 
\[   \langle \varphi^*_t f , g \rangle = \langle e^{ i t P / h } f ,
g \rangle, \ \ f , g \in C^\infty ( X ) , \]
into modes of decay,  $ P $ has to be considered on a modified space, 
containing  $ C^\infty ( X ) $,  and such that $ P - z $ becomes a
Fredholm operator for $ \Im z > - A  h $. Roughly speaking, this may
provide an expansion into modes with errors of size $ e^{ - A t }$ as
$ t \to \infty $ -- see \cite[Theorem 2]{f-r-sj} for the
case of Anosov diffeomorphism (where $ t = n $ is discrete and $ \epsilon = e^{-A} $). 

Following earlier works of Aguilar-Combes, Balslev-Combes, and Simon, 
Helffer-Sj\"ostrand \cite{h-sj} introduced 
an approach based on an {\em escape function}, that is a function on 
$ T^* X $, such that $ H_p G \leq 0 $ everywhere and $ H_p G < 0 $ near the
infinity of the characteristic set of $ p $ (in fact in as large a set
as possible). It turns out that the inhomogeneous Sobolev spaces
used in the works of 
Baladi \cite{bal}, Tsujii \cite{ts1,ts2,ts3},  Blank, Butterley, Gou\"ezel, Keller, and
Liverani \cite{b-k-l,b-l,g-l,liv1,liv2}
can be reinterpreted this way. As explained in \S
\ref{op} below,
\begin{gather*}  P - z : {\mathcal D}_{tG} \to H_{tG} , 
\text{ is a
   Fredholm operator for $ \Im z > -th/C , \ t \gg 1 $, } \\
  H_{tG} := \exp ( t G^w
( x, h D ) ) L^2 ( X ) , 
 \end{gather*}
where $ G $ is an escape function and $ {\mathcal D}_{t G } $ is the
domain of $ P $ in $ H_{tG} $.   Working on $ H_{tG} $ is equivalent
to working on $L^2(X)$ with $ P $ conjugated by the exponential weight and
the basic idea comes from (see \eqref{eq:PtG} for a precise statement)
\begin{equation}
\label{eq:pG}
e^ {t G^w ( x, h  D) } P e^{ - t G^w ( x, h D )  }  \sim 
P + i t h ( H_p G )^w ( x, h D ) .
\end{equation}
As shown in \cite[\S 3]{f-sj} negativity of $ H_p G $ near infinity 
implies the Fredholm property of $ P - z $ for $ \Im z > - t h/C $.
The eigenvalues of  $ P$, which are now
complex and lie in $ \Im z \leq 0 $, are called {\em Pollicott--Ruelle
  resonances}, 
and we denote them by $ \Res ( P )$. 

In scattering theory polynomial bounds on the number of 
resonances were first obtained by Melrose. Sharp 
bounds in odd dimensions were given by  Melrose \cite{Mel3} 
for obstacles and by Zworski \cite{Zw} for compactly supported
potentials; the even dimensional sharp bounds were later obtained
by Vodev \cite{Vo}.

The seminal work of Sj\"ostrand \cite{sj-d} and numerous mathematics
and physics papers that followed (see \cite{da-dy} and \cite{sj-n-z}
for references) then indicated that the exponent in the upper bound
on the number of resonances near energy, say energy  $ E =1
$\footnote{Since $ P = - i h V $ the problem is clearly homogeneous
  and we can work near any non-zero energy level. The high energy
limit corresponds to $ h \to 0 $.}   should be related to the 
dimension of the trapped set $ K_E $. In our case that dimension is
in fact integral (see \eqref{e:ke0}):
\[  \dim K_E = n = 2 \mu + 1 ,  \ \  \mu = \frac { n -1 } 2 \in
\mathbb N, \]
For bounds in regions of size $ h $, the result of \cite{sj-z}
(following an earlier small neighbourhood bound by 
Guillop\'e-Lin-Zworski for Selberg zeta functions of Schottky groups
-- see \cite{da-dy})
suggests
that 
\begin{equation}
\label{eq:bm}   \# \Res ( P ) \cap D ( 1 , C_0 h ) = {\mathcal O } ( h^{- \mu }  )
. \end{equation}
This is precisely the bound given in \eqref{eq:b3} which by rescaling
translates to the bound \eqref{eq:b31}.  In many situations the
interest in \eqref{eq:bm} lies in the fact that $ \mu $ may not be 
an integer.

To obtain \eqref{eq:bm} we need to modify $ G $ so that, 
when using \eqref{eq:pG}, we can invert the conjugated $ P - 1 $ 
microlocally on a larger set. 
More precisely we introduce the additional conjugations in
  \eqref{e:p-t} below;
that is done as in \cite{sj-z} with the
modifications
presented in \S \ref{cef}. The ideas behind that strategy are 
explained in \cite[\S 2]{sj-z}. To localize in a neighbourhood
of size $ h$ a second microlocal argument is needed and 
we followed the functional calculus approach presented in 
\cite{da-dy}. 

Eventually, these constructions produce a modified
operator $   \widetilde P_t - z $ given in Main Lemma \ref{l:main}
which is invertible for $ z \in D ( 1 , C_0 h ) $,  and which differs
from the conjugated operator by an operator $ - i t h A  $,  microlocally 
localized in an $ {\mathcal O} ( \sqrt h ) $  neighbourhood of $ K_1 
$ with an additional $ {\mathcal O } (h) $ localization in the
direction of  $ dp $.   

The basic semiclassical intuition then dictates that
the number of resonances of $ P $ in $ D ( 1, C_0 h )$  (which 
are the same as the eigenvalues of the conjugated operator)
is given by the phase space volume occupied by $ A $ multiplied by 
$ h^{-n} $. That volume is estimated by  $ h $ (due to the energy localization)
times the volume of an  $ {\mathcal O} ( \sqrt h ) $ neighbourhood of the
smooth set $ K_1 $ inside $ p^{-1} ( 1 ) $. Since $ K_1 $ has
dimension  $ 2 \mu + 1 $, its codimension inside $ p^{-1} ( 1 )$  is given
by $ 2 ( n - \mu - 1 ) $. This gives the following bound:
\[  h^{-n} \times h \times h^{ \frac 12 ( 2 ( n - \mu - 1 ) ) } =
h^{-\mu } , \]
proving \eqref{eq:bm}.

\section{Microlocally weighted spaces and discrete spectrum of the
  generator of the flow}
\label{op}

In this section we review, in a slightly modified form, the
construction of Faure-Sj\"ostrand \cite{f-sj} which provides 
Hilbert spaces on which $ P - z  $ is a Fredholm operator
for $ \Im z  > - \beta h $. 

Following \cite{h-sj},\cite{f-r-sj} the crucial component
is the construction of an escape function on $ T^* X $, that
is a function $ G $ for which $ H_p G \leq 0 $ everywhere, with strict 
inequality on a large set.  

The decomposition into neutral (one dimensional), 
stable and unstable subspaces is given by (here $E_0(x)$
is spanned by $V$)
\[  T_x X = E_0 (x) \oplus E_s( x ) \oplus E_u ( x ) .\]
The dual decomposition is
obtained by taking $ E_0^* ( x ) $ to be the annihilator of
$ E_s ( x) \oplus E_u ( x ) $, $ E_u^* ( x) $ the annihillator
of $ E_u ( x ) \oplus E_0 ( x ) $, and similarly for $ E_s^*( x) $.
That makes $E_s^*( x ) $ dual to $E_u ( x ) $, $ E^*_u (x) $ dual to 
$ E_s ( x ) $, and $ E_0^* ( x ) $ dual to $ E_0 ( x ) $. The fiber
of the cotangent bundle decomposes as 
\begin{equation}\label{e:cotdec}
T_x^*X = E_0^*(x)\oplus E_s^*(x)\oplus E_u^*(x) .
\end{equation}
We recall that the distributions $E_s^* (x) $ and $E_u^* ( x ) $ have only H\"older regularity, but
$E_0^* ( x ) $ and $E_s^*(x)\oplus E_u^* ( x ) $ are smooth, and that 
$E_0^*=\mathbb R \alpha$ -- see \eqref{eq:alsu}.

Let $|\cdot|$ be any smooth norm on the fibers of $T^*X$ such that
the norm of $\alpha$ and
the dual norm of $V$ are equal to $1$, so that in particular
$$
\{|\xi|\leq 1/2\}\cap p^{-1}(1)=\emptyset.
$$
Here $p=p(x,\xi)$ is the classical Hamiltonian corresponding to $P$, i.e. the linear function
on the fibers of $T^*X$ defined by $V$.

The classical flow on $ T^*X $ is explicit in terms of $ \varphi_t $:
\begin{equation}
\label{eq:Hpex}  \exp t H_p ( x , \xi ) = ( \varphi_t ( x ) , (D_x \varphi_t ( x)^T )^{-1}\xi) \, 
\end{equation}
It follows from the hyperbolicity of the flow (see~\cite[(1.13)]{f-sj})
that for some constants $C>0$ and $\theta>0$ and for all $t>0$,
\begin{equation}
\label{e:hyperbolicity}
\begin{gathered}
|\exp t H_p (\rho)|\leq Ce^{-\theta t}|\rho|,\quad \rho\in E_s^*,\\
|\exp -t H_p (\rho)|\leq Ce^{-\theta t}|\rho|,\quad \rho\in E_u^*.
\end{gathered}
\end{equation}
The {\em trapped set}, that is the set of $ ( x,\xi )$ which stay 
in a compact subset (depending on $ ( x, \xi ) $) for all $  t \in
\mathbb R$
is  given by 
\begin{equation}\label{e:ke0} 
K = E_0^* = \bigcup_{x\in X } E_0^* ( x ) \subset T^* X
\end{equation}
and is a smooth submanifold of $ T^*X $, which is symplectic
away from the zero section.
Indeed, since the decomposition \eqref{e:cotdec} is invariant under $\varphi_t$,
we may apply \eqref{e:hyperbolicity} with $\exp(\mp t H_p(\rho))$ in place of $\rho$ to show
$K \subset E_0^*$, and $E_0^* \subset K$ follows from $E_0^* = \mathbb R \alpha$.
The energy slice of the trapped set is defined as
\begin{equation}
  \label{e:k-e}
K_1 = p^{-1} ( 1 ) \cap E^*_0 .
\end{equation}

We denote by $ S ^k ( T^*X ) $ the standard space of symbols
used in \cite[\S 14.2]{e-z} and by $ S^{ k+ } ( T^*X ) $
the intersection of  $ S^{k+\epsilon } $ for all $ \epsilon > 0 $.
The class of semiclassical pseudodifferential operators
corresponding to $S^k(T^*X)$ is denoted by  $\Psi^k(T^*X)$ -- 
 see~\cite[\S 14.2]{e-z} and \cite[\S 3.1]{da-dy} for a review of the semiclassical
notation used below. We also write $A\in\Psi^{k+}$ to denote that $A\in\Psi^{k+\varepsilon}$
for every $\varepsilon>0$.

Following~\cite{f-sj}, we construct the weight function $G$:
%
%
\begin{lemm}
  \label{l:escape-f-sj}
Take any conic neighborhoods $U_0,U_0'$ of $E_0^*$, with $U_0\Subset U'_0$
and $U'_0\cap (E_u^*\cup E_s^*)=\emptyset$.
Then there exist real-valued functions $m\in S^0(T^*X),f_0\in S^1(T^*X)$
such that
\begin{enumerate}
\item $m$ is
positively homogeneous of degree 0 for $|\xi|\geq 1/2$,
equal to $-1,0,1$ near the intersection of $\{|\xi|\geq 1/2\}$ with
$E_u^*,E_0^*,E_s^*$, respectively, and
\begin{equation}
  \label{e:hpm}
H_p m<0\text{ near }(U'_0\setminus U_0)\cap \{|\xi|>1/2\},\quad
H_p m\leq 0\text{ on }\{|\xi|>1/2\};
\end{equation}
\item $\langle\xi\rangle^{-1}f_0\geq c>0$ for some constant $c$;
\item the function $G=m\log f_0$
satisfies for some constant $c$,
\begin{equation}
  \label{e:hpG}
H_pG\leq -c<0\text{ on }\{|\xi|\geq 1/2\}\setminus U_0,\quad
H_pG\leq 0\text{ on }\{|\xi|\geq 1/2\}.
\end{equation}
\end{enumerate}
\end{lemm}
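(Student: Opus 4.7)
The proof splits naturally into the construction of the angular function $m$, the choice of the radial function $f_0$, and the combination of the two. The first is the heart of the matter; the others reduce to short computations exploiting~\eqref{e:hyperbolicity}.

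\emph{Angular function.} Since $m$ is required to be positively homogeneous of degree $0$ for $|\xi|\geq 1/2$, it suffices to define it on the cosphere bundle $S^*X$. The projected flow on $S^*X$ fixes $E_0^*$, has $E_s^*$ as a repelling invariant submanifold, and has $E_u^*$ as an attracting one; this is the dual dynamical picture encoded by~\eqref{e:hyperbolicity}. I would begin by choosing pairwise disjoint conic neighborhoods $V_u,V_s$ of $E_u^*,E_s^*$ together with a neighborhood $V_0\subset U_0$ of $E_0^*$, and setting $m$ equal to $-1,0,+1$ on these. The main step is then to extend $m$ smoothly across the complement, preserving positive homogeneity, so that $H_pm\leq 0$ globally on $\{|\xi|>1/2\}$ and $H_pm<0$ strictly on $(U_0'\setminus U_0)\cap\{|\xi|>1/2\}$. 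This is a standard Lyapunov-function construction: after passing to a fiber norm adapted to the splitting~\eqref{e:cotdec} so that~\eqref{e:hyperbolicity} holds pointwise in time at $t=0$, one takes a crude piecewise-constant interpolant and averages it along the flow over a sufficiently long interval. The Anosov attractor/repeller dichotomy guarantees that the resulting function is non-increasing along trajectories and strictly decreasing on any trajectory crossing the trapping annulus.

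\emph{Radial function.} Take $f_0 = K\langle\xi\rangle$ for a large constant $K\geq 1$, so that $\langle\xi\rangle^{-1}f_0 = K$ provides the lower bound required in the second condition of the lemma. With $G = m\log f_0$, a direct computation gives
\[
H_p G = (H_pm)\log f_0 + m\,H_p\log\langle\xi\rangle.
\]
The adapted-norm form of~\eqref{e:hyperbolicity} yields $H_p\log\langle\xi\rangle\leq -\theta$ on a conic neighborhood of $E_s^*$ and $\geq \theta$ on a conic neighborhood of $E_u^*$. On these neighborhoods $m=\pm 1$ is locally constant, so $H_pm=0$ and $H_pG = m\,H_p\log\langle\xi\rangle\leq -\theta$ in both cases. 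On $U_0$, $m\equiv 0$ gives $G\equiv 0$ and $H_pG=0$. On the transition annulus $U_0'\setminus U_0$, the first summand is at most $-c\log K$ by strict negativity of $H_pm$; once $K$ is chosen so that $\log K$ exceeds the sup norm of the bounded quantity $m\,H_p\log\langle\xi\rangle$ on this annulus, one obtains $H_pG\leq -c'<0$ there. A partition-of-unity gluing of these three regions yields~\eqref{e:hpG}.

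\emph{Main obstacle.} The construction of $m$ is the nontrivial step. The subbundles $E_s^*,E_u^*$ have only H\"older transverse regularity, so $m$ cannot be realized as a smooth function of the distance to them. The averaging trick circumvents this, but one must balance the length of the averaging interval (long enough for monotonicity) against preservation of the locally-constant boundary values near $E_u^*,E_0^*,E_s^*$; the mutual separation of the three submanifolds by open conic cones is what makes this balance possible.
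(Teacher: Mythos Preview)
Your sketch is in the spirit of the construction in Faure--Sj\"ostrand \cite[Lemma~1.2]{f-sj}, which is all the paper actually invokes here: the paper's own proof is a citation, with a rescaling of $\xi$ to send $\{|\xi|\leq R\}$ to $\{|\xi|\leq 1/2\}$, the choice $f_0=\sqrt{1+f^2}$ from \cite{f-sj}, and an adjustment of the neighborhoods $\widetilde N_u,\widetilde N_0,\widetilde N_s$ there to be compatible with $U_0,U_0'$. So you are attempting more than the paper does, namely to reproduce the cited construction directly. Two genuine gaps remain.

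\textbf{The radial function.} You take $f_0=K\langle\xi\rangle$ and assert that ``the adapted-norm form of~\eqref{e:hyperbolicity} yields $H_p\log\langle\xi\rangle\leq -\theta$ on a conic neighborhood of $E_s^*$''. But a norm adapted to the splitting $E_0^*\oplus E_s^*\oplus E_u^*$ is in general only H\"older, since $E_s^*,E_u^*$ are only H\"older; you cannot use it as $\langle\xi\rangle$ and still apply $H_p$. The integrated estimate~\eqref{e:hyperbolicity} (with its constant $C$) does not give an infinitesimal sign for $H_p\log\langle\xi\rangle$ with respect to the given smooth norm. What is needed is a \emph{smooth} function comparable to $\langle\xi\rangle$ whose logarithmic derivative along $H_p$ has a definite sign near $E_s^*$ and the opposite sign near $E_u^*$. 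This is precisely why \cite{f-sj} does not take $f_0=\langle\xi\rangle$ but builds $f$ by averaging the smooth norm along the flow over a long time window, converting~\eqref{e:hyperbolicity} into a pointwise inequality without ever differentiating the H\"older splitting.

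\textbf{The covering.} Your three regions --- conic neighborhoods of $E_s^*$ and $E_u^*$, together with $U_0'$ --- do not cover $\{|\xi|\geq 1/2\}$: a covector with comparable nonzero components in all three sub-bundles of~\eqref{e:cotdec} lies in none of them. On that remaining set you still need $H_pG\leq -c<0$, and since $m$ is not locally constant there the only available mechanism is strict negativity of $H_pm$ combined with the $\log K$ amplification. Your averaging construction of $m$ will in fact deliver $H_pm<0$ on the \emph{entire} complement in $S^*X$ of small neighborhoods of $E_s^*,E_0^*,E_u^*$ (this is the full Lyapunov property for the repeller/saddle/attractor configuration on the cosphere bundle), but you only claim it on $U_0'\setminus U_0$, which matches part~(1) of the lemma yet is insufficient to deduce part~(3). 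Once you state and use the stronger strict-decrease property, the $\log K$ trick handles this region in the same way.
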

\begin{proof}
The existence of $m$ follows from~\cite[Lemma~1.2]{f-sj},
where we rescale the parameter $\xi$ to map the region $\{|\xi|\leq R\}$
of~\cite{f-sj} into $\{|\xi|\leq 1/2\}$, and use the function
$\sqrt{1+f^2}$ of~\cite{f-sj} as $f_0$. The inequality~\eqref{e:hpm}
follows directly from the proof of~\cite[Lemma~1.2]{f-sj}, if we
choose the neighborhoods $\widetilde N_u,\widetilde N_0,\widetilde N_s$
there so that $U'_0\cap(\widetilde N_u\cup\widetilde N_s)=\emptyset$
and $\widetilde N_0\subset U_0$.
\end{proof}
%
%
Note that~\cite{f-sj} needed $H_pG<-C<0$ on $\{|\xi|\ge 1/2\} \setminus U_0$
for a large constant $C$; in this paper,
we instead multiply $G$ by a large $t>0$ in the conjugation. The neighborhoods
$U_0,U'_0$ will be chosen in \S\ref{s:global-escape}.

The function $G$ satisfies derivative bounds
\begin{equation}
\label{eq:strG}
G  = {\mathcal O} ( \log \langle \xi \rangle ) , \ \ 
 \partial^\alpha_x\partial^\beta_\xi H^k_p G
= {\mathcal O} \left( \langle\xi\rangle^{-|\beta|+} \right) 
, \ \ | \alpha | + | \beta| + k \geq 1 \,.
\end{equation}
In particular, $ \partial^\alpha_x\partial^\beta_\xi H^k_p G \in 
S^{- | \beta| + } $.

We now use \cite[\S 8.3]{e-z} (the small modification to take into
account the symbol classes $ S^m $ and $ S^{m + } $ is done as in 
\cite[\S 9.3, \S 14.2]{e-z}) and define
\begin{equation} 
\label{eq:defHG}
H_{tG} ( X ) := \exp ( - t G^w ( x, h D ) ) L^2 ( X , dx ) . 
\end{equation}
Note that this space is topologically isomorphic, with the norm of the isomorphism
depending on $h$, to the nonsemiclassical space used in~\cite{f-sj}:
$$
H^m(X) := \exp( -t G^w(x, D))L^2(X, dx).
$$
Indeed, for $|\xi|>1/2$ the difference 
$$
G(x,\xi)-G(x,h\xi)=m(x,\xi)\log(f_0(x,\xi)/f_0(x,h\xi))
$$
and it its derivatives, are bounded uniformly in $(x,\xi)$ for any
fixed $h$, so that equality of the spaces follows from 
\cite[Theorem~8.8]{e-z} applied with $ h $ fixed -- 
see also~\cite[\S 5.2]{f-sj}.

The domain of $ P $ acting on $ H_{tG} $ is defined as
\begin{equation}
\label{eq:defDG}
{\mathcal D}_{tG} := \{ u \in {\mathcal D}' ( X )  \; : \; u, P u \in H_{tG } \}.
\end{equation}
The action of $ P $ on $ H_{t G}$ is equivalent to the action of the
more natural operator $P_{tG} $ on $ L^2 $:
\begin{equation}
\label{eq:PtG}
\begin{split}  P_{tG} & := e^{ t G^w } P e^{ - t G^w } 
= \exp (t  \ad_{ G^w } ) P
\\ & = \sum_{k=0}^N \frac{ t^k } { k!} \ad_{G^w }^k P + R_N ( x, h D)
, \ \ \ 
R_N \in h^{N+1} S^{ - N + } . 
\end{split}
\end{equation}
One way to see the validity of \eqref{eq:PtG} is to note
that the operators $ e^{ \pm t G^w } $ are pseudodifferential
operators \cite[Theorem 8.6]{e-z}
 and hence the pseudodifferential
calculus applies directly \cite[Theorem 9.5, Theorem 14.1]{e-z}. To 
show that $R_N \in h^{N+1} S^{ - N + }$, write $R_N$ as a sum of $2t+1$ many 
terms of a Taylor series plus an integral remainder which can be
analyzed as in, for example, 
\cite[Lemma 7.2]{da-dy}.

Using this expansion we can follow arguments in \cite[\S 3]{f-sj} 
to show 
\begin{prop}
\label{p:1}
For $ P_{tG} $ defined by \eqref{eq:PtG} we have:

\noindent
i) $  P_{tG} - z : \mathcal D(P_{tG})\to L^2 $ is a Fredholm
operator of index zero for $ \Im z > - th / C $. Here $\mathcal D(P_{tG})$
is the domain of $P_{tG}$.

\noindent
ii) $P_{tG}-z$ is invertible for $\Im z>Ch$ and $C$ large enough.

\end{prop}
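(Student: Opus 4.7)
The plan is to adapt Faure--Sj\"ostrand~\cite[\S 3]{f-sj} to the semiclassical setting, with the weight parameter $t$ playing the role of their large constant. From \eqref{eq:PtG} the Weyl symbol of $P_{tG}$ is
\[
\sigma(P_{tG})=p+ithH_pG+r,\qquad r\in h^2 S^{0+}(T^*X),
\]
so $\Re\sigma(P_{tG})=p$ and $\Im\sigma(P_{tG})=thH_pG$ at the principal level; the higher Taylor terms in $t$ lie in correspondingly lower symbol classes by~\eqref{eq:strG}.

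For part~(i), the first step is to show that $P_{tG}-z$ is semiclassically elliptic outside a compact neighbourhood of $K_1$ (for $z$ in a bounded set). In the low-frequency region $\{|\xi|\le 1/2\}$ the bound $|p|\le 1/2$ gives real-part control when $\Re z$ is away from $[-1/2,1/2]$; at large $|\xi|$ the real symbol $|p-\Re z|$ grows like $|\xi|$ except in a conic neighbourhood of the characteristic cone. In the high-frequency region $\{|\xi|\ge 1/2\}\setminus U_0$ the escape estimate~\eqref{e:hpG} yields $H_pG\le -c$, so
\[
|\Im(\sigma(P_{tG})-z)|=|thH_pG-\Im z|\ge thc-\Im z\ge th(c-1/C),
\]
which is strictly positive once $C>1/c$. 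The residual region, where neither inequality provides control, is a bounded piece of $U_0$ around $K_1$. Choosing $Q\in\Psi^{-\infty}(T^*X)$ with real Schwartz symbol equal to $1$ on that residual region makes $P_{tG}-z+iQ$ semiclassically elliptic in the sense of~\cite[\S 3]{f-sj}, and a standard parametrix construction produces $E\in\Psi^{-1}$ with $E(P_{tG}-z+iQ)=I+h^\infty\Psi^{-\infty}$. Since $Q$ is compact on $L^2$, this identity exhibits $P_{tG}-z$ as a Fredholm operator from $\mathcal D(P_{tG})$ to $L^2$.

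For part~(ii), note that $P_{tG}^*=P_{-tG}$ since $P$ is self-adjoint and $G$ is real, so $\Im P_{tG}=(P_{tG}-P_{-tG})/(2i)$ has principal Weyl symbol $thH_pG$, which is nonpositive on $\{|\xi|\ge 1/2\}$ and uniformly bounded on the complement. The sharp G\aa rding inequality then yields $\langle\Im P_{tG}u,u\rangle\le C_1 h\|u\|^2$, with $C_1$ depending on $t$ but not on $h$. Consequently, for $\Im z>2C_1 h$,
\[
|\Im\langle(P_{tG}-z)u,u\rangle|\ge(\Im z-C_1 h)\|u\|^2\ge\tfrac12\Im z\,\|u\|^2,
\]
whence $\|(P_{tG}-z)u\|\ge\tfrac12\Im z\,\|u\|$. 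Running the same argument on the adjoint $P_{-tG}-\bar z$, whose imaginary part has \emph{non}negative principal symbol on $\{|\xi|\ge 1/2\}$ while $-\Im\bar z=\Im z>Ch$, shows the adjoint is also injective with closed range. Combined with Fredholmness from~(i) this yields invertibility. Part~(i) then also delivers index zero, since the half-plane $\{\Im z>-th/C\}$ is connected, the Fredholm index is locally constant, and~(ii) supplies an invertible point.

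The main obstacle lies in Step~1, the semiclassical parametrix: because $G=m\log f_0$ lies in $S^{0+}$ rather than $S^0$, each operator in the expansion~\eqref{eq:PtG} carries the logarithmic loss recorded in~\eqref{eq:strG}, and the inversion must be performed in the weighted symbol classes $S^{k+}$. This bookkeeping has already been carried out in~\cite[\S 3]{f-sj}, and the remainder $R_N$ in~\eqref{eq:PtG} can be estimated exactly as in~\cite[Lemma~7.2]{da-dy}, so both arguments can be quoted wholesale.
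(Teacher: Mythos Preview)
The paper offers no proof of this proposition beyond the sentence ``Using this expansion we can follow arguments in \cite[\S 3]{f-sj},'' so your ultimate appeal to that reference is exactly what the paper does. Your treatment of part~(ii) via sharp G\aa rding on $\Im P_{tG}$, the parallel adjoint estimate, and the index-zero argument from connectedness of the half-plane are all correct and standard.

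There is, however, a real gap in your sketch of part~(i). You claim that adding a compactly microlocalized $iQ$ makes $P_{tG}-z+iQ$ ``semiclassically elliptic'' with a two-sided parametrix $E\in\Psi^{-1}$ and remainder in $h^\infty\Psi^{-\infty}$. This cannot hold: the principal symbol of $P_{tG}$ in $S^1$ is the real function $p$, which vanishes on the noncompact cone $E_s^*\oplus E_u^*$, and the imaginary contribution $thH_pG$ is $\mathcal O(h)$, hence subprincipal. A compactly supported $Q$ does nothing at fiber infinity, so $P_{tG}-z+iQ$ is \emph{not} elliptic in $S^1$ and admits no symbolic parametrix in $\Psi^{-1}$.

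What \cite[\S 3]{f-sj} actually does is much closer in spirit to your own part~(ii): for a suitable nonnegative $\chi_0\in C_0^\infty(T^*X)$ equal to $1$ on the bounded residual set, one has $thH_pG-th\chi_0\leq -cth$ everywhere on $\{|\xi|\geq 1/2\}$ by~\eqref{e:hpG}, and the region $\{|\xi|<1/2\}$ is handled by enlarging $\chi_0$. Sharp G\aa rding then yields
\[
\Im\langle (P_{tG}-ith\chi_0^w-z)u,u\rangle\leq -cth\|u\|_{L^2}^2
\]
for $\Im z>-th/C$, and the matching adjoint estimate gives invertibility of $P_{tG}-ith\chi_0^w-z$ from its domain to $L^2$. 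Since $\chi_0^w$ is compact, $P_{tG}-z$ is then Fredholm. Your last paragraph is right that one should quote \cite[\S 3]{f-sj} wholesale; the point is only that the mechanism there is an a~priori estimate of G\aa rding type, not ellipticity and symbolic inversion. (A minor additional imprecision: the residual region is ``near $K_1$'' only when $\Re z$ is near $1$; for general $z$ in the half-plane it is simply some bounded set depending on $\Re z$.)
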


\section{Construction of the escape function}
\label{cef}

In this section we modify the escape function of Faure--Sj\"ostrand
near the trapped set.  It will be
quantized to become the  operator $F$ appearing in Main Lemma \ref{l:main}
below. We will use symbols depending on two semiclassical parameters
$h,\tilde h$, see \S~\ref{s:proof} for details.

\subsection{Construction near the trapped set}
\label{s:escape}

We start with an escape function $\hat f$ defined in a neighborhood of $K_1$
and with $H_p\hat f\leq -c<0$ away from a $C(h/\tilde h)^{1/2}$ sized 
neighborhood of the trapped set $K$. This is
a modification of the construction in~\cite[\S 7]{sj-z}, based on 
an earlier construction in~\cite[\S 5]{sj-d}.  The changes come 
from a different structure of the incoming and outgoing manifold which 
we now define:
\begin{equation}
\label{eq:defG}
 \Gamma_\pm = \{ ( x, \xi ) \;  : \;
\exp ( t H_p ) ( x , \xi ) \not \to \infty , \ \ t \to \mp \infty \}. 
\end{equation}
We note that $ \infty $ refers to the fiber infinity of $ T^* X $. We
see that 
\[  K  =   \Gamma_+
\cap   \Gamma_- , \] 
and that by~\eqref{e:hyperbolicity},
\begin{gather}
\label{eq:Gam}
\begin{gathered} 
  \Gamma_+ = \bigcup_{x \in X }   \Gamma_{+, x},  \ \ \ 
  \Gamma_- = \bigcup_{x \in X }   \Gamma_{-,x} , \\
  \Gamma_{+,x} :=  E_0^* ( x ) \oplus E_u^*  ( x
  ), \ \ \ 
  \Gamma_{-,x} :=  E_0^* ( x ) \oplus E_s^*  ( x
  ) . 
\end{gathered}
\end{gather}
 This provides a continuous but typically non-smooth foliation of $   \Gamma_\pm $ by 
smooth (linear) manifolds. We note that for $ ( x ,\xi ) \in   K $, the
(linear) leaves of the two folliations intersect cleanly with a fixed
excess equal to $ n $, the dimension of $ X $, 
\[  ( E_0^* ( x ) \oplus E_u^*  ( x ) ) \cap  ( E_0^* ( x ) \oplus E_s^*  ( x )
)  = E_0^* ( x ) , \]
see~\cite[Appendix~C.3]{hor}.
This, rather than the transversality of leaves, assumed in~\cite[\S 5]{sj-d} and~\cite[\S 7]{sj-z}
constitutes the only difference in the construction. Nevertheless the
basic facts established in~\cite[\S 5]{sj-d} still hold. To state
them we use
the notation $ f \sim g $ if, for a constant $ C $, $ f/C  \leq g \leq
C f $.
\begin{lemm}
\label{l:f-0}
Let $ d  $ be a distance function on a neighbourhood of $ K_1 \subset T^*
X $.
For $ \rho $ in a neighbourhood of $ K_1 $, we have
\begin{equation}
\label{eq:f1}
d ( \rho,   K ) \sim d ( \rho,   \Gamma_+ ) + d ( \rho,
  \Gamma_- )  . 
\end{equation}
Also, there exists a constant $ C $ such that for any $ \tau \geq 0$ 
we can find an open neighbourhood $ \Omega_\tau $ of $ K_1 $
such that 
\begin{equation}
\label{eq:f2}  d ( \exp (\pm \tau H_p) ( \rho ) , \Gamma_\pm ) \leq C
e^{ - \tau / C } d( \rho,   \Gamma_\pm ) ,\
\rho\in\Omega_\tau. \end{equation}
\end{lemm}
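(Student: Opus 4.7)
The plan is to reduce both parts of the lemma to the flow-invariant cotangent splitting $T_x^*X = E_0^*(x)\oplus E_s^*(x)\oplus E_u^*(x)$, combined with the hyperbolicity bounds \eqref{e:hyperbolicity}. Writing $\rho=(x,\xi)$ with $\xi=\xi_0+\xi_u+\xi_s$ in this decomposition, the description \eqref{eq:Gam} of $\Gamma_\pm$ reads $\rho\in\Gamma_+\iff \xi_s=0$, $\rho\in\Gamma_-\iff \xi_u=0$, and $\rho\in K\iff \xi_s=\xi_u=0$. Thus $\Gamma_\pm$ appear as (only H\"older regular) graphs over the appropriate subbundles, meeting in the common zero section $K=E_0^*$.

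For \eqref{eq:f1}, the first step is to establish the fiberwise comparabilities
$$
d(\rho,\Gamma_+)\sim|\xi_s|,\quad d(\rho,\Gamma_-)\sim|\xi_u|,\quad d(\rho,K)\sim|\xi_s|+|\xi_u|,
$$
uniformly for $\rho$ in a small neighborhood of $K_1$. The upper bounds come from the explicit candidates $(x,\xi_0+\xi_u)\in\Gamma_+$, $(x,\xi_0+\xi_s)\in\Gamma_-$, and $(x,\xi_0)\in K$. For the lower bounds I would use the continuity of $x\mapsto E_s^*(x), E_u^*(x)$: any point $(y,\eta)\in\Gamma_+$ close to $\rho$ has no $E_s^*(y)$ component, so by H\"older continuity of the splitting its difference from $(x,\xi)$ has $E_s^*(x)$ part of size $\geq c|\xi_s|-o(|\xi_s|)$, forcing $d(\rho,\Gamma_+)\geq c|\xi_s|/2$. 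Once these comparabilities are in place, \eqref{eq:f1} is immediate.

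For \eqref{eq:f2}, given $\rho=(x,\xi_0+\xi_u+\xi_s)$ near $K_1$ I would set $\rho_+:=(x,\xi_0+\xi_u)\in\Gamma_+$, so that $d(\rho,\rho_+)\sim|\xi_s|\sim d(\rho,\Gamma_+)$. The key observation is that, by \eqref{eq:Hpex}, the flow acts linearly on each fiber and preserves the splitting $E_0^*\oplus E_s^*\oplus E_u^*$, while $\Gamma_+$ is itself $\exp(tH_p)$-invariant. Hence $\exp(\tau H_p)\rho_+\in\Gamma_+$ and the two points $\exp(\tau H_p)\rho$ and $\exp(\tau H_p)\rho_+$ lie in the same fiber over $\varphi_\tau(x)$, differing exactly by $(D\varphi_\tau(x)^T)^{-1}\xi_s\in E_s^*(\varphi_\tau(x))$. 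Applying \eqref{e:hyperbolicity} to this $E_s^*$-vector gives
$$
d(\exp(\tau H_p)\rho,\Gamma_+)\leq |(D\varphi_\tau(x)^T)^{-1}\xi_s|\leq Ce^{-\theta\tau}|\xi_s|\leq C'e^{-\tau/C'}d(\rho,\Gamma_+),
$$
and the estimate for $\Gamma_-$ under backward flow is symmetric.

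The main obstacle I anticipate is the H\"older-only regularity of the bundles $E_s^*$ and $E_u^*$, which rules out a smooth nearest-point projection onto $\Gamma_\pm$ and makes the comparabilities of step one delicate; but since we only need two-sided bounds on distances (not smoothness of the projection), the continuity of the splitting is sufficient. The reason $\Omega_\tau$ must shrink with $\tau$ is that, although $\xi_s$ contracts, the $E_u^*$ component of $\exp(\tau H_p)\rho$ can grow exponentially, so to keep both $\rho$ and $\exp(\tau H_p)\rho$ inside the fixed reference neighborhood of $K_1$ where the fiberwise comparability was proved, one must choose $\Omega_\tau$ of size $\lesssim e^{-C\tau}$.
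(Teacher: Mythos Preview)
Your proposal is correct and follows essentially the same route as the paper: reduce to the fiberwise splitting $\xi=\xi_0+\xi_s+\xi_u$, use continuity of $x\mapsto E_\bullet^*(x)$ to get the comparabilities $d(\rho,\Gamma_\pm)\sim|\xi_{s/u}|$ (the paper phrases this as $d((x,\xi),\Gamma_\pm)\sim d_x(\xi,\Gamma_{\pm,x})$ and invokes clean intersection with fixed excess for \eqref{eq:f1}), then use the explicit fiber-linear form \eqref{eq:Hpex} of the flow together with \eqref{e:hyperbolicity} to contract the $E_s^*$-component for \eqref{eq:f2}. Your explanation of why $\Omega_\tau$ must shrink---so that both $\rho$ and $\exp(\pm\tau H_p)\rho$ remain in the fixed reference neighborhood where the fiberwise comparability holds---is exactly the mechanism the paper uses.
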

\begin{proof}
The cleanness with a fixed excess (affine spaces always intersect
cleanly) 
shows that for $ x \in X $ and $ \rho $ close to $ K_1 $ we
still have a uniform statement,
\[  d ( \rho,   \Gamma_{+ , x } \cap   \Gamma_{-, x} ) \sim 
d ( \rho,   \Gamma_{- , x } ) + d ( \rho,   \Gamma_{ + , x })  . \]
Hence \eqref{eq:f1} follows as in the proof of \cite[Lemma 5.1]{sj-d}.

To obtain \eqref{eq:f2} 
we choose a euclidean distance $ d_y $  on fibers $ T_y ^* X $,
depending smoothly on $ y $. From the continuity of $ x \mapsto \Gamma_{\pm , x
  } $ we see that for $ ( x, \xi ) $ in a bounded set, 
\begin{equation}
\label{eq:bounded} d  ( ( x , \xi ) , \Gamma_\pm ) \sim d_x ( \xi,   \Gamma_{\pm, x } ).
\end{equation}
We now fix a bounded neighbourhood of $K_1$,  $ \Omega $, in which 
\eqref{eq:bounded} is valid uniformly and define 
\[  \Omega_\tau := \bigcup_{ | t | \leq \tau } 
\exp t H_p ( \Omega ) . \]
Then for $ ( x, \xi ) \in \Omega_\tau $, \eqref{eq:Hpex} shows 
that
\[ \begin{split} 
d ( \exp ( \pm \tau H_p ) ( x, \xi ) ,   \Gamma_\pm )  & \sim
 d_{\varphi_{\pm \tau } ( x ) }   ( ( D_x \varphi_ {\pm \tau} (x)
^T)^{-1}  \xi ,   \Gamma_{\pm , \varphi_{\pm \tau} ( x )}  ) , 
\end{split} \]
with constants independent of $ \tau $.

Hence with  $ C $ independent of $ \tau $, 
and $ ( x, \xi ) \in \Omega_\tau $, we have 
\begin{equation} 
\label{eq:rede}  d_{\varphi_{\tau} ( x ) } (( D_x \varphi_ {\tau} (x)
^T)^{-1}  \xi ,   \Gamma_{+ , \varphi_{ \tau} ( x )}  ) 
\leq C e^{-\tau/C } d_x ( \xi , \Gamma_{+ , x } ) , \ \ ( x, \xi )
\in \Omega_\tau . 
\end{equation}
(We state this for $ + $, the other case being analogous.) 
We write the unique decomposition $ \xi = \xi_u + \xi_s + \xi_0 $, 
$ \xi_\bullet \in E_\bullet^* ( x ) $, so that by the invariance of the
subspaces $  E_\bullet^*  $,
\begin{gather*}
 ( D_x \varphi_ {\tau} (x) ^T)^{-1}  \xi = 
( D_x \varphi_ {\tau} (x) 
^T)^{-1}  \xi_u   + ( D_x \varphi_ {\tau} (x)
^T)^{-1}  \xi_s   + ( D_x \varphi_ {\tau} (x)
^T)^{-1}  \xi_0 , \\ ( D_x \varphi_ {\tau} (x)
^T)^{-1}  \xi_\bullet \in E^*_\bullet ( \varphi_\tau ( x ) ) . 
\end{gather*} 
This means that 
\[  d_{\varphi_{\tau} ( x ) } (( D_x \varphi_ {\tau} (x)
^T)^{-1}  \xi ,   \Gamma_{+ , \varphi_{\tau} ( x )}  )  \sim 
\|  ( D_x \varphi_ {\tau} (x)
^T)^{-1}  \xi_s \| , \ \  
d_x ( \xi , \Gamma_{+ , x } )  \sim 
\| \xi_s \|. \]
The estimate \eqref{eq:rede} then follows
from the Anosov property of the flow~\eqref{eq:Hpex},\eqref{e:hyperbolicity}.
\end{proof}

We now proceed as in~\cite[\S 7]{sj-z} and obtain
regularizations, $ \widehat \varphi_\pm $, of $ d ( \bullet,  
\Gamma_\pm )^2 $ -- see~\cite[Proposition~7.4]{sj-z}. The next lemma states
the properties of the resulting escape functions obtained using~\cite[Lemma~7.6]{sj-z}
applied with $ \epsilon = ( h/ \tilde h
)^{\frac12} $:
%
%
\begin{lemm}
  \label{l:f-1}
There exists a conic neighborhood $U'_0$ of $E_0^*$ and a real-valued function
$$
\hat f(x,\xi;h,\tilde h)\in C^\infty(U'_0\cap \{1/2<|\xi|<2\})
$$
such that:
\begin{enumerate}
\item $\hat f$ satisfies the derivative bounds
\begin{equation}
  \label{e:hat-f-bounded}
\hat f=\mathcal O(\log(1/h)),\quad
\partial^\alpha_{x,\xi}H_p^k \hat f=\mathcal O((h/\tilde h)^{-|\alpha|/2}),\
|\alpha|+k\geq 1;
\end{equation}
\item there exists a constant $C_{\hat f}$ such that
\begin{equation}
  \label{e:hat-f-positive}
H_p \hat f(x,\xi)\leq -C_{\hat f}^{-1}<0\text{ for }d((x,\xi),K)\geq C_{\hat f}(h/\tilde h)^{1/2}.
\end{equation}
\end{enumerate}
\end{lemm}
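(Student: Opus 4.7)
The strategy is the one indicated in the paragraph just above the lemma: first build smooth regularizations $\hat\varphi_\pm$ of the squared distances to $\Gamma_\pm$, and then combine them into a logarithmic escape function, scaled with $\epsilon=(h/\tilde h)^{1/2}$.

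\textbf{Step 1: smoothing the distance functions.} I would invoke \cite[Proposition~7.4]{sj-z} (adapted from \cite[\S 5]{sj-d}) to produce real-valued $\hat\varphi_\pm\in C^\infty(U'_0\cap\{1/2<|\xi|<2\})$ with $\hat\varphi_\pm(\rho)\sim d(\rho,\Gamma_\pm)^2$ and
$$
H_p\hat\varphi_+\leq -c_0\hat\varphi_+,\qquad H_p\hat\varphi_-\geq c_0\hat\varphi_-,
$$
on a neighborhood of $K_1$. The construction averages $d(\cdot,\Gamma_\pm)^2$ along $\exp(\mp tH_p)$ against a weight $e^{-t/C}$; the differential inequality is read off from \eqref{eq:f2}, and higher derivative bounds come from the fact that $\Gamma_\pm$ are foliated by smooth affine leaves. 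The only place the original argument uses transversality is through the equivalence $d(\rho,K)\sim d(\rho,\Gamma_+)+d(\rho,\Gamma_-)$, and in our setting this is exactly \eqref{eq:f1} of Lemma~\ref{l:f-0}.

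\textbf{Step 2: the logarithmic combination.} Following \cite[Lemma~7.6]{sj-z} with $\epsilon=(h/\tilde h)^{1/2}$, I set
$$
\hat f=\log\frac{\hat\varphi_++\epsilon^2}{\hat\varphi_-+\epsilon^2}.
$$
For \eqref{e:hat-f-bounded}: $|\hat f|=\mathcal O(\log(1/\epsilon^2))=\mathcal O(\log(1/h))$ since $\hat\varphi_\pm=\mathcal O(1)$ on the region under consideration. The estimates $\partial_{x,\xi}\hat\varphi_\pm=\mathcal O(\hat\varphi_\pm^{1/2})$ with bounded higher derivatives, combined with the chain rule applied to $\log(\hat\varphi_\pm+\epsilon^2)$, give $\partial^\alpha_{x,\xi}\hat f=\mathcal O(\epsilon^{-|\alpha|})=\mathcal O((h/\tilde h)^{-|\alpha|/2})$; since $H_p\hat\varphi_\pm=\mathcal O(\hat\varphi_\pm)$, each application of $H_p$ preserves the scale.

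\textbf{Step 3: negativity and the main obstacle.} A direct computation gives
$$
H_p\hat f=\frac{H_p\hat\varphi_+}{\hat\varphi_++\epsilon^2}-\frac{H_p\hat\varphi_-}{\hat\varphi_-+\epsilon^2}\leq -c_0\left(\frac{\hat\varphi_+}{\hat\varphi_++\epsilon^2}+\frac{\hat\varphi_-}{\hat\varphi_-+\epsilon^2}\right).
$$
When $d(\rho,K)\geq C_{\hat f}\epsilon$ with $C_{\hat f}$ large, \eqref{eq:f1} forces $\hat\varphi_+(\rho)+\hat\varphi_-(\rho)\gtrsim C_{\hat f}^{\,2}\epsilon^2$, so at least one of the two fractions is bounded below by a positive constant, giving \eqref{e:hat-f-positive}. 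The only non-routine point of the whole argument is that \cite[Proposition~7.4]{sj-z} and its companion \cite[\S 5]{sj-d} were stated under the assumption that $\Gamma_+$ and $\Gamma_-$ meet transversally along $K$, whereas here they intersect only cleanly with excess $n$. The role of Lemma~\ref{l:f-0} is precisely to verify that the two geometric inputs actually used, namely \eqref{eq:f1} and \eqref{eq:f2}, survive in the clean-intersection setting; once these are in hand, the rest of the construction from \cite{sj-z} is formally unchanged.
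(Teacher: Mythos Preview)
Your proposal is correct and follows exactly the route indicated in the paper: invoke \cite[Proposition~7.4]{sj-z} to obtain the regularizations $\hat\varphi_\pm$, then apply \cite[Lemma~7.6]{sj-z} with $\epsilon=(h/\tilde h)^{1/2}$, noting that Lemma~\ref{l:f-0} supplies the geometric inputs \eqref{eq:f1}--\eqref{eq:f2} in place of the transversality used in \cite{sj-z}. Your write-up is in fact more explicit than the paper's, which simply points to those two references without spelling out the logarithmic combination or the sign computation.
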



\subsection{A global escape function}
\label{s:global-escape}
We recall that our goal is to construct a function $ f $ such that for
the escape function, $ G $, given in Lemma \ref{l:escape-f-sj}
$ H_p ( G + f ) $ is as negative as possible. 

For that we cut $\hat f$ off and modify it to get an escape function defined
on the whole $T^*X$. Let $U'_0$ be the conic neighborhood of $E^*_0$ from
Lemma~\ref{l:f-1} and shrink it so that
$$
U'_0\cap (E_u^*\cup E_s^*)=\emptyset,\quad
U'_0\cap p^{-1}(1)\subset \{1/2<|\xi|<2\}.
$$
The second statement is possible since $E_0^*(x)\cap p^{-1}(1)=\alpha(x)$
and $|\alpha|=1$.
Take any conic neighborhood $U_0$ of $E_0^*$ such that $U_0\Subset U'_0$
and a nonnegative function
$$
\chi_{\hat f}\in C_0^\infty(U'_0\cap \{1/2<|\xi|<2\}),\quad
\chi_{\hat f}=1\text{ near }U_0\cap p^{-1}(1).
$$
Let $m\in S^0(X)$ be the function constructed in Lemma~\ref{l:escape-f-sj}.
We choose a constant $M>0$ large enough so that the function
\begin{equation}
  \label{e:f}
f:=\chi_{\hat f}\hat f+M\log(1/h)m
\end{equation}
satisfies
$$
\begin{gathered}
H_pf(x,\xi)\leq -c<0\text{ for }(x,\xi)\in U'_0\cap p^{-1}(1)
\text{ with }d((x,\xi),K)\geq C_{\hat f} (h/\tilde h)^{1/2},\\
H_p f\leq 0\text{ near }p^{-1}(1).
\end{gathered}
$$
This is possible since $H_p(\chi_{\hat f}\hat f)\leq -C_{\hat f}^{-1}<0$
when $(x,\xi)\in U_0\cap p^{-1}(1)$ and $d((x,\xi),K)\geq C_{\hat f}(h/\tilde h)^{1/2}$;
$H_p(\chi_{\hat f}\hat f)=\mathcal O(\log(1/h))$ everywhere by~\eqref{e:hat-f-bounded};
$\supp\chi_{\hat f}\subset U'_0$; and $H_pm\leq -c<0$ on $(U'_0\setminus U_0)\cap p^{-1}(1)$
by~\eqref{e:hpm}.

\begin{lemm}
\label{l:at} 
There exists a nonnegative function $\tilde a$ supported $\mathcal O((h/\tilde h)^{1/2})$
close to $K$ and such that for  $G$ given in Lemma~\ref{l:escape-f-sj}
and $ f $ given by \eqref{e:f}, 
\begin{equation}
  \label{e:positive}
H_p(G+f)-\tilde a\leq -c<0\text{ on }p^{-1}(1), \ \ \ \partial^\alpha
\tilde a = {\mathcal O} ( (h/\tilde h)^{-|\alpha|/2} ) .
\end{equation}
\end{lemm}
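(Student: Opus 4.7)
The plan is to show that $H_p(G+f)\leq -c$ already holds on $p^{-1}(1)$ outside an $\mathcal O(\sqrt{h/\tilde h})$-neighborhood of $K$ by simply combining the properties of $G$ and $f$, and then to absorb the remaining bounded contribution near $K$ by taking $\tilde a$ to be a smooth bump of bounded height supported in that neighborhood.

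First I would piece together the bounds already proved. Since $p^{-1}(1)\subset\{|\xi|>1/2\}$, Lemma~\ref{l:escape-f-sj} gives $H_pG\leq 0$ on $p^{-1}(1)$ and $H_pG\leq -c_1<0$ on $p^{-1}(1)\setminus U_0$. The construction in \eqref{e:f} gives $H_pf\leq 0$ on a neighborhood of $p^{-1}(1)$ and $H_pf\leq -c_2<0$ on $U'_0\cap p^{-1}(1)$ whenever $d(\rho,K)\geq C_{\hat f}(h/\tilde h)^{1/2}$. Adding these bounds case by case on $p^{-1}(1)\setminus U_0$, on $(U_0\setminus K)\cap\{d(\cdot,K)\geq C_{\hat f}(h/\tilde h)^{1/2}\}$, and everywhere in between, shows that
$$
H_p(G+f)\leq -c<0\quad\text{on}\quad p^{-1}(1)\setminus B,\qquad B:=\bigl\{\rho\in U_0\cap p^{-1}(1):d(\rho,K)<C_{\hat f}(h/\tilde h)^{1/2}\bigr\}.
$$

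Second I would bound $H_p(G+f)$ on $B$. On a (fixed) neighborhood of $E_0^*$ the function $m$ of Lemma~\ref{l:escape-f-sj} vanishes identically, so $G=m\log f_0\equiv 0$ and $H_pG\equiv 0$ there; likewise $\chi_{\hat f}\equiv 1$ near $K\cap p^{-1}(1)$, so $f=\hat f+M\log(1/h)\,m=\hat f$ on that neighborhood. Thus on $B$ we have $H_p(G+f)=H_p\hat f=\mathcal O(1)$ by~\eqref{e:hat-f-bounded} with $|\alpha|=0,k=1$. Call this bound $C_0$. Then I would let $\psi\in C^\infty(T^*X)$ be a smooth regularization of $d(\cdot,K)^2$ satisfying $\partial^\alpha\psi=\mathcal O((h/\tilde h)^{1-|\alpha|/2})$, built from the functions $\hat\varphi_\pm$ of~\cite[Proposition~7.4]{sj-z} via $\psi=\hat\varphi_++\hat\varphi_-$ (which is equivalent to $d(\cdot,K)^2$ by~\eqref{eq:f1}). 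Fix $\chi\in C_c^\infty([0,\infty))$ with $\chi=1$ on $[0,2C_{\hat f}^2]$ and $\chi=0$ on $[4C_{\hat f}^2,\infty)$, and set
$$
\tilde a(\rho):=(C_0+c)\,\chi\!\left(\frac{\psi(\rho)}{h/\tilde h}\right).
$$

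Finally I would verify the conclusions. By construction $\tilde a\geq 0$ and $\supp\tilde a\subset\{d(\rho,K)\leq C\sqrt{h/\tilde h}\}$. On $\supp\tilde a\setminus B$ we have $H_p(G+f)\leq -c$ and $\tilde a\geq 0$, so the inequality holds; on $B$ we have $\tilde a\equiv C_0+c$, hence $H_p(G+f)-\tilde a\leq C_0-(C_0+c)=-c$; off $\supp\tilde a$ the inequality was already established in the first step. The derivative bound follows from the chain rule: each derivative of $\chi(\psi/(h/\tilde h))$ produces $\partial\psi/(h/\tilde h)=\mathcal O((h/\tilde h)^{-1/2})$, giving $\partial^\alpha\tilde a=\mathcal O((h/\tilde h)^{-|\alpha|/2})$. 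The main obstacle is the observation in the second step that $H_p(G+f)$ is merely $\mathcal O(1)$ — not $\mathcal O(\log(1/h))$ — on $B$; this is what allows $\tilde a$ to be chosen of bounded height and therefore to satisfy the required derivative scaling. It crucially uses that $m\equiv 0$ near $E_0^*$, which kills the potentially dangerous $M\log(1/h)\,H_pm$ contribution and the $\log f_0$ factor in $G$.
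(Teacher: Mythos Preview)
Your proof is correct. The reduction to a bounded obstruction on $B$ is exactly the observation the paper makes just before its proof (``$H_p(G+f)=H_p\hat f$ since $m=0$ near $K_1$''), and your three-region case split matches the paper's logic. The only difference is in how $\tilde a$ is built: you take a constant-height bump $\tilde a=(C_0+c)\,\chi(\psi/(h/\tilde h))$ cut off by a regularized distance to $K$, whereas the paper sets $\tilde a=\theta(-H_p\hat f)\,\chi_a$ with $\theta\in C^\infty(\mathbb R)$ chosen so that $\theta(\lambda)+\lambda\geq C_{\hat f}^{-1}/2$ for all $\lambda$ in the range of $-H_p\hat f$ and $\supp\theta\subset(-\infty,C_{\hat f}^{-1})$. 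The paper's choice is more economical---it reuses $H_p\hat f$ directly, avoids invoking the regularizations $\hat\varphi_\pm$ a second time, and the support condition follows immediately from~\eqref{e:hat-f-positive}. It also yields an explicit formula~\eqref{e:tilde-a} that is quoted verbatim later (for the commutator bound $[\widehat P,\widetilde A]=\mathcal O(h)$ in \S\ref{s:mainproof} and for the choice of $\chi_a$ in \S5.3). Your construction proves the lemma as stated but would require small adaptations downstream. One cosmetic point: since $\hat\varphi_\pm$ are only defined on a bounded neighborhood of $K_1$, you should multiply your $\tilde a$ by a fixed cutoff in $C_0^\infty(U'_0\cap\{1/2<|\xi|<2\})$ to make it globally defined; this is harmless since the inequality outside that set is already covered by your first step.
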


Equation~\eqref{e:positive} is the key component of the positive commutator
argument in \S\ref{s:mainproof}. By~\eqref{e:hpG} and the properties of $f$,
it suffices to verify~\eqref{e:positive} in an $\mathcal O((h/\tilde h)^{1/2})$ sized
neighborhood of~$K_1$, where $H_p(G+f)=H_p\hat f$ (since $m=0$ near $K_1$).

\medskip
\noindent
{\em Proof of Lemma \ref{l:at}.}
To construct~$\tilde a$, take a nonnegative function $\theta\in C^\infty(\mathbb R)$ such that
$\supp\theta\subset (-\infty,C_{\hat f}^{-1})$ and $\theta(\lambda)+\lambda=1$ for $\lambda\leq C_{\hat f}^{-1}/2$.
Then by~\eqref{e:hat-f-positive}, the function $\theta(-H_p\hat f)$ is supported
$\mathcal O((h/\tilde h)^{1/2})$ close to $K$. Now, take any nonnegative $\chi_a\in C_0^\infty(T^*X)$
such that $\chi_{\hat f}=1$ near $\supp\chi_a$, but $\chi_a=1$ near $U_0\cap p^{-1}(1)$, and define
\begin{equation}
  \label{e:tilde-a}
\tilde a:=\theta(-H_p\hat f)\chi_{a}\in C_0^\infty(T^*X).
\end{equation}
Then~\eqref{e:positive} follows since on $U_0\cap p^{-1}(1)$,
$$
H_p\hat f-\tilde a=H_p\hat f-\theta(-H_p\hat f)\leq -C_{\hat f}^{-1}/2.
$$
\stopthm

\section{Upper bound on the number of resonances}
\label{s:proof}

In this section, we prove the bound~\eqref{eq:b3} on the number of
Pollicott--Ruelle resonances.

\subsection{Reduction to a weighted estimate}
We start by showing how \eqref{eq:b3} follows from the estimate
\eqref{e:main}
given in the following lemma. 
%
%
\begin{lemm} (Main lemma)
  \label{l:main}
There exist families of bounded operators%
\footnote{To combine the notation of~\cite{f-sj} and~\cite{da-dy},
we denote by $G$ and $\hat f$ their respective escape functions and
by $G^w$ and $\widehat F$ the corresponding pseudodifferential operators.}
$\widehat F,F_1,A$ on $L^2(X)$, depending on two parameters $h,\tilde h$
(where we choose $\tilde h$ small enough and $h$ small enough depending on $\tilde h$),
such that for any fixed constant
$C_0$ and $t>0$ large enough, the modified conjugated operator
$$
\widetilde P_t:=e^{t\widehat F}e^{tF_1}P_{tG}e^{-tF_1}e^{-t\widehat F}-ith A
$$
satisfies the estimate (with $C$ independent of $h,\tilde h$) for any $u\in C^\infty(X)$
\begin{gather}
  \label{e:main}
\begin{gathered}
\|u\|_{L^2}\leq {C\over\max(h,\Imag z)}\|(\widetilde P_t-z)u\|_{L^2},
\\
\text{ for } \ 
|\Re z-1|\leq C_0h,\ \ 
-C_0h\leq\Im z\leq 1.
\end{gathered}
\end{gather}
Moreover, we can write $A=A_R+A_E$, where for some constant $C(\tilde h)$
depending on $\tilde h$,
\begin{gather}
  \label{e:finiterank}
\begin{gathered}
\|A_R\|_{L^2\to L^2}=\mathcal O(1),\quad
\|A_E\|_{L^2\to L^2}=\mathcal O(\tilde h),\\
\rank A_R\leq C(\tilde h)h^{-{n-1\over 2}}.
\end{gathered}
\end{gather}
\end{lemm}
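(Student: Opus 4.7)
The plan is to combine the positive commutator argument of Sj\"ostrand--Zworski~\cite{sj-z} with the functional-calculus decomposition of Datchev--Dyatlov~\cite{da-dy}. Choose $\widehat F$ and $F_1$ so that $\widehat F + F_1$ quantizes the global escape function $f$ of~\eqref{e:f}, and set $A := \tilde a^w$ with $\tilde a$ from Lemma~\ref{l:at}. Iterating the expansion~\eqref{eq:PtG} for each of the three conjugations (by $e^{tG^w}$, $e^{tF_1}$, $e^{t\widehat F}$) yields, modulo lower-order terms in the exotic $S_{1/2}$-calculus generated by $\hat f$,
\[
\widetilde P_t \equiv P + ith\bigl(H_p(G+f) - \tilde a\bigr)^w,
\]
whose imaginary part is $\leq -cth$ on $p^{-1}(1)$ by~\eqref{e:positive}.

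To prove~\eqref{e:main}, split $u$ microlocally using a cutoff $\chi_E$ to $\{|p-1| \leq C_1 h\}$. Away from this region, $\widetilde P_t - \Re z$ has principal symbol $p - \Re z$ of size at least $(C_1-C_0)h$ (since $|\Re z - 1|\leq C_0 h$), and symbolic inversion gives $\|(1-\chi_E)u\|_{L^2} \leq Ch^{-1}\|(\widetilde P_t - z)u\|_{L^2}$. Near $p^{-1}(1)$, sharp G\aa rding applied to $cth/2 - \Im\sigma(\widetilde P_t)\geq 0$ yields
\[
-\Im\langle \widetilde P_t \chi_E u, \chi_E u\rangle \geq (cth/2)\,\|\chi_E u\|_{L^2}^2 - \mathcal O(h^2/\tilde h)\,\|u\|_{L^2}^2,
\]
and combining this with $\Im\langle(\widetilde P_t - z)u,u\rangle = \Im\langle \widetilde P_t u,u\rangle - \Im z\,\|u\|_{L^2}^2$ produces the $h^{-1}$ bound, once $t$ and $\tilde h^{-1}$ are large enough to dominate $-\Im z \leq C_0 h$ and the remainder. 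The $(\Im z)^{-1}$ bound for $\Im z \gtrsim h$ follows directly from the quasi-dissipative character of $\widetilde P_t$ at top order.

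For the decomposition $A = A_R + A_E$, apply a functional calculus in $P$ at the $h$-scale as in~\cite[\S 3]{da-dy}: with $\chi \in C_c^\infty(\mathbb R)$ equal to $1$ near $0$, set $A_R := A\,\chi((P-1)/(C_2 h))$ and $A_E := A - A_R$. The Weyl phase-space volume of $\supp\tilde a \cap \{|p-1|\leq C_2 h\}$ is of order $h\,(h/\tilde h)^{(n-1)/2}$, because $\tilde a$ is supported in a $(h/\tilde h)^{1/2}$-neighborhood of $K$ (codimension $n-1$ in $T^*X$) and the energy cutoff contributes the extra factor of $h$ in the direction of $dp$; Weyl's law then gives $\mathrm{rank}(A_R) \leq C(\tilde h)h^{-(n-1)/2}$. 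The residual $A_E$ has operator norm $\mathcal O(\tilde h)$ by an almost-analytic extension / non-stationary-phase estimate in the exotic calculus, trading the transverse decay of $\tilde a$ on the $(h/\tilde h)^{1/2}$-scale against the $h$-energy gap on the support of $1-\chi$.

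The main obstacle lies in the exotic $S_{1/2}$-calculus: each Egorov-type expansion and each sharp G\aa rding inequality produces errors of order $h/\tilde h$ rather than $h^2$, so the positivity $\sim th$ of the imaginary symbol must dominate $h/\tilde h$ -- which is why $\tilde h$ is fixed small first and $h$ tends to zero second, exactly as the lemma is stated.
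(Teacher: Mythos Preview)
Your decomposition~\eqref{e:finiterank} has a real gap. With $A=\tilde a^w$ and $A_E=\tilde a^w\bigl(1-\chi((P-1)/(C_2h))\bigr)$, you claim $\|A_E\|=\mathcal O(\tilde h)$ by ``trading the transverse decay of $\tilde a$ against the $h$-energy gap.'' But $\tilde a$ has \emph{no} decay in the $p$-direction: by Lemma~\ref{l:at} it is supported in an $(h/\tilde h)^{1/2}$-neighbourhood of $K=E_0^*$, and along $K$ one has $p(x,r\alpha(x))=r$, so $p$ ranges over an interval of length~$\mathcal O(1)$ on $\supp\tilde a$ (determined by the $h$-independent cutoff $\chi_a$ in~\eqref{e:tilde-a}). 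Thus there are points where $\tilde a$ is of order~$1$ while $|p-1|$ is of order~$1$; there $1-\chi=1$ and the symbol of $A_E$ is bounded below. Hence $\|A_E\|_{L^2\to L^2}$ is of order~$1$, not $\mathcal O(\tilde h)$. Your rank bound on $A_R$ is fine, but the lemma requires both conditions for the \emph{same} operator~$A$.

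The paper repairs this by building the energy cutoff into $A$ itself: $A:=\chi((\tilde h/h)\widehat P)\,\tilde a^w$, where $\widehat P\in\Psi^1$ is self-adjoint with symbol $p-1$ near $\supp\tilde a$; then~\eqref{e:finiterank} follows from~\cite[Lemma~6.1]{da-dy}. The price is an extra reduction in the proof of~\eqref{e:main}: with $\widetilde A:=\tilde a^w$ one must show $\|(A-\widetilde A)u\|\leq C(\tilde h/h)\|(\widetilde P_t-z)u\|+\mathcal O(\tilde h\max(1,h^{-1}\Im z))\|u\|$, which uses $[\widehat P,\widetilde A]=\mathcal O(h)$ and $\widehat P=P-1+\mathcal O(h)$ microlocally on $\WFh(\widetilde A)$, and then prove the estimate with $\widetilde A$ in place of~$A$.

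A secondary issue is your elliptic step: on $\{|p-1|\geq C_1h\}$ the symbol $p-\Re z$ is only bounded below by a multiple of~$h$, not by a positive constant, so ``symbolic inversion'' is not a standard parametrix and would itself require second-microlocal machinery at scale~$h$. The paper sidesteps this by taking $B_E\in\Psi^0$ with $p-1$ genuinely elliptic on $\WFh(B_E)$, applying the ordinary elliptic estimate, and absorbing $\log(1/h)|\sigma(B_E)|^2$ into the global positive-commutator symbol~\eqref{e:positive2}; the intermediate region near $p^{-1}(1)$ but away from $K_1$ is then controlled by $H_pG<0$ from~\eqref{e:hpG}, not by ellipticity.
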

%
%
Note that in~\cite{da-dy} we required the estimate~\eqref{e:main} for the
$H_h^{-1/2}\to H_h^{1/2}$ norm instead of the $L^2\to L^2$ norm; this is
because the Laplacian considered there is a differential operator
of order 2, while our differential operator $P$ has order 1.

Assume that \eqref{e:main} holds.
Since $e^{\pm t\widehat F},e^{\pm tF_1}$ are bounded on $L^2$, the operator
\begin{equation}
  \label{e:p-t}
P_t:=e^{t\widehat F}e^{tF_1}P_{tG}e^{-tF_1}e^{-t\widehat F}
\end{equation}
satisfies part (i) of Proposition~\ref{p:1}, and its eigenvalues in
$D(1,C_0h)$ are precisely the Pollicott--Ruelle resonances.
The operator
$A$ will be compactly microlocalized in the sense of~\cite[\S 3.1]{da-dy}
and in particular compact $L^2\to L^2$; therefore,
adding it will not change the Fredholm property of $P_t$.
By~\cite[Lemma~A.1]{f-sj}, the bound~\eqref{e:main}
implies
$$
\|(\widetilde P_t-z)^{-1}\|_{L^2\to L^2}\leq {C\over\max(h,\Imag z)},\quad
|\Re z-1|\leq C_0h,\
-C_0h\leq\Im z\leq 1.
$$
The estimate~\eqref{eq:b3} is now proved as in~\cite[\S 2]{da-dy}, using Jensen's inequality.

\smallskip

We now construct the operators $\widehat F,F_1,A$ of Lemma~\ref{l:main}.
We will use the class $\Psi_{1/2}^{\comp}(X)$ of pseudodifferential operators whose
symbols have compact essential support and satisfy the bound
$$
\sup|\partial^\alpha_{x,\xi}a|=\mathcal O((h/\tilde h)^{-|\alpha|/2}).
$$
We refer to \cite[\S 3.3]{sj-z} and \cite[\S 5.1]{da-dy} for the motivation for this class of symbols
and the properties of corresponding operators. We take 
$$
\widehat F:=(\chi_{\hat f}\hat f)^w,\quad
F_1:=M\log(1/h)m^w,
$$
so that the operator $\widehat F+F_1$ has the symbol $f$ from~\eqref{e:f}.
Recalling the derivative bounds~\eqref{e:hat-f-bounded}, we see that
$$
\widehat F\in\log(1/h)\Psi_{1/2}^{\comp}(X),\quad
F_1\in\log(1/h)\Psi^0(X).
$$
Finally, we put
\begin{equation}
  \label{e:A}
A:=\chi((\tilde h/h)\widehat P)\widetilde A,
\end{equation}
where:
\begin{itemize}
\item $\widetilde A=\tilde a^w$, with $\tilde a$ defined in~\eqref{e:tilde-a}.
By~\eqref{e:hat-f-bounded} and~\eqref{e:tilde-a}, we have
$\widetilde A\in\Psi^{1/2}(X)$;
\item $\chi\in C_0^\infty(\mathbb R)$ is equal to 1 near zero;
\item $\widehat P$ is any symmetric pseudodifferential operator in $\Psi^1(X)$
with principal symbol $\hat p(x,\xi)$ elliptic in the class $S^1$
for $|\xi|$ large enough and $\hat p=p-1$ on $U'_0\cap \{1/2<|\xi|<2\}\supset\supp\tilde a$;
\item $\chi((\tilde h/h)\widehat P)$ is defined by means of functional calculus
of self-adjoint operators on $L^2(X)$ (see~\cite[\S 5.2]{da-dy} for properties
of such operators).
\end{itemize}
Under these conditions, \eqref{e:finiterank} follows from~\cite[Lemma~6.1]{da-dy}. The key observation here is that $\tilde a$ is supported in an
$\mathcal O((h/\tilde h)^{1/2})$ sized neighborhood of $K$; the latter is an $n+1$ dimensional
smooth manifold invariant under the flow $\exp(tH_p)$ and thus under $\exp(tH_{\hat p})$
near $\supp\tilde a$; therefore, for each $R>0$ (see~\cite[\S 7.4]{da-dy})
$$
\begin{gathered}
\Vol_{\hat p^{-1}(0)}\{\exp(tH_{\hat p})(x,\xi)\mid |t|\leq R,\\
(x,\xi)\in(\supp\tilde a\cap \hat p^{-1}(0))+B_{\hat p^{-1}(0)}(R(h/\tilde h)^{1/2})\}
\leq C(h/\tilde h)^{n-1\over 2}.
\end{gathered}
$$

\subsection{Proof of Main Lemma \ref{l:main}}
  \label{s:mainproof}

In this section, we assume that $|\Re z -1|\leq C_0h$ and $-C_0h\leq\Im z\leq 1$,
and $u\in C^\infty(X)$; we will prove the estimate~\eqref{e:main}.
See the outline of the proof
of Theorem~2 in~\cite[Introduction]{da-dy} for an explanation of the positive commutator
argument used here.

We start by writing an expansion for the operator $P_t$ from~\eqref{e:p-t}. By~\eqref{eq:PtG},
$$
P_{tG}=P+t[G^w,P]+\mathcal O_t(h^2)_{\Psi^{-1+}}.
$$
Similarly,
$$
e^{tF_1}P_{tG}e^{-tF_1}=P+t[G^w+F_1,P]+\mathcal O_t(h^{2-})_{\Psi^{-1+}}.
$$
Finally, using the Bony--Chemin Theorem 
\cite[Th\'eor\`eme~6.4]{b-c},\cite[Theorem~8.6]{e-z} as in~\cite[Lemma~7.2]{da-dy}, we have
\begin{equation}
  \label{e:conj}
P_t=P+t[G^w+F_1+\widehat F,P]+\mathcal O_t(h\tilde h)_{L^2\to L^2}.
\end{equation}
In particular,
\begin{equation}
  \label{e:conj-rough}
P_t=P+\mathcal O_t(h)_{\Psi^{0+}}+\mathcal O_t(h\log(1/h))_{L^2\to L^2}.
\end{equation}

\smallskip

Next, we get rid of the $\chi((\tilde h/h)\widehat P)$ part
of the operator $A$; namely, we claim that~\eqref{e:main} follows from the estimate
\begin{equation}
  \label{e:main-1}
\|u\|_{L^2}\leq {C\over\max(h,\Im z)}\|(P_t-ith\widetilde A-z)u\|_{L^2}.
\end{equation}
For that, write $1-\chi(\lambda)=\lambda\psi(\lambda)$ with $\psi\in C^\infty(\mathbb R)$ bounded;
then
$$
\|(A-\widetilde A)u\|_{L^2}\leq C(\tilde h/h)\|\widehat P\widetilde Au\|_{L^2}.
$$
By~\eqref{e:hat-f-bounded} and~\eqref{e:tilde-a},
$H_p\tilde a=\mathcal O(1)_{S_{1/2}^{\comp}}$; therefore, by part~7 of~\cite[Lemma~5.2]{da-dy}
we have $[\widehat P,\widetilde A]=\mathcal O(h)_{L^2\to L^2}$, and
$$
\|(A-\widetilde A)u\|_{L^2}\leq C(\tilde h/h)\|\widetilde A\widehat Pu\|_{L^2}+\mathcal O(\tilde h)\|u\|_{L^2}.
$$
Since $\widehat P=P-1+\mathcal O(h)$ near $\WFh(\widetilde A)$, and by our assumptions on $z$
we find
$$
\|(A-\widetilde A)u\|_{L^2}\leq C(\tilde h/h)\|\widetilde A(P-z)u\|_{L^2}+\mathcal O(\tilde h\max(1,h^{-1}\Im z))\|u\|_{L^2}.
$$
Next, $\WFh(F_1)\cap\WFh(\widetilde A)=\emptyset$ since $\WFh(\widetilde A)\subset K$ and
$m=0$ in a conic neighborhood of $K$ by
Lemma~\ref{l:escape-f-sj}. Also, by~\eqref{e:hat-f-bounded}, 
\[ H_p(\chi_{\hat f}\hat f)=H_p\hat f=\mathcal O(1)_{S_{1/2}} \ \text{ 
near $\WFh(\widetilde A)\subset \{\chi_{\hat f}=1\} $.} \]
From part~7 of~\cite[Lemma~5.2]{da-dy}, we have $[P,\widehat F]=\mathcal O(h)$ near $\WFh(\widetilde A)$.
Then by~\eqref{e:conj}, we have $\widetilde A(\widetilde P_t-P)=\mathcal O_t(h)_{L^2\to L^2}$
and thus
$$
\|(A-\widetilde A)u\|_{L^2}\leq C(\tilde h/h)\|(\widetilde P_t-z) u\|_{L^2}+\mathcal O(\tilde h\max(1,h^{-1}\Im z))\|u\|_{L^2}.
$$
Combining this with~\eqref{e:main-1}, we get
$$
\|u\|_{L^2}\leq {C\over\max(h,\Im z)}\|(\widetilde P_t-z)u\|_{L^2}+\mathcal O(\tilde h)\|u\|_{L^2},
$$
which implies~\eqref{e:main} if $\tilde h$ is small enough.

\smallskip

To prove~\eqref{e:main-1}, we restrict to a neighborhood of the energy
surface as follows. Recalling~\eqref{e:positive}, the fact that
the function on the left-hand side of  \eqref{e:positive} is $\mathcal O(\log(1/h))_{S^{0+}}$,
and the only unbounded part of this function as $|\xi|\to\infty$ is $H_pG\leq 0$,
we see that there exists $B_E\in\Psi^0$ such that
$p-1$ is elliptic on $\WFh(B_E)$ and
\begin{equation}
  \label{e:positive2}
H_p(G+f)-\tilde a-\log(1/h)|\sigma(B_E)|^2\leq -c<0\text{ everywhere on }T^*X.  
\end{equation}
By the elliptic estimate (see for instance~\cite[(3.3)]{da-dy})
$$
\|B_E u\|_{L^2}\leq C\|(P-z)u\|_{H^{-1}_h}+\mathcal O(h^\infty)\|u\|_{L^2}.
$$
By~\eqref{e:conj-rough}, we have
\begin{equation}
\label{e:elliptic}
\|B_E u\|_{L^2}\leq C\|(P_t-ith\widetilde A-z)u\|_{L^2}+\mathcal O(h\log(1/h))\|u\|_{L^2}.
\end{equation}
We then claim that~\eqref{e:main-1} follows from
\begin{equation}
\label{e:main-2}
\Im\langle (P_t-ith(\widetilde A+\log(1/h)B_E^*B_E)) u,u\rangle_{L^2}\leq (-c_1th+\mathcal O_t(h\tilde h))\|u\|_{L^2}^2,
\end{equation}
where the constant $c_1>0$ is independent of $t$. Indeed, if $t$ is large enough
depending on $C_0$ and $\tilde h$ is small enough depending on $t$, then~\eqref{e:main-2} implies
$$
\Im\langle(P_t-ith\widetilde A-z)u,u\rangle_{L^2}\leq -C_t^{-1}\max(h,\Imag z)\|u\|_{L^2}^2
+th\log(1/h)\|B_Eu\|_{L^2}^2.
$$
Combining this with~\eqref{e:elliptic}, we get~\eqref{e:main-1}, which
is the claim in Lemma \ref{l:main}. 

\subsection{Proof of \eqref{e:main-2}}

Here we depart slightly from the strategy of \cite{da-dy} and replace
a microlocal partition of unity argument of \cite[\S 7]{da-dy} by global
positive commutator estimates.

 By~\eqref{e:conj} we reduce \eqref{e:main-2} to
$$
\Im\langle(P-t([P,G^w+F_1+\widehat F]+ih\widetilde A+ih\log(1/h)B_E^*B_E))u,u\rangle_{L^2}\leq -c_1th\|u\|_{L^2}^2.
$$
Since $P$ is self-adjoint on $L^2(X)$, this would follow from
\begin{equation}
\label{e:main-3}
\Re\langle Qu,u\rangle_{L^2}\geq -C\tilde h\|u\|_{L^2}^2,
\end{equation}
where
$$
Q:=-ih^{-1}[P,G^w+F_1+\widehat F]+\widetilde A-2c_1+\log(1/h)B_E^*B_E, 
$$
and 
\[ Q \in \Psi^{0+}+\log(1/h)\Psi^0+\log(1/h)\Psi^{\comp}_{1/2}. \]
Its principal symbol is given by 
$$
q:=-H_p(G+f)+\tilde a-2c_1+\log(1/h)|\sigma(B_E)|^2.
$$
Note that $Q$ is equal to any quantization of $q$ plus a remainder
that is $\mathcal O(\tilde h)_{L^2\to L^2}$. (See part~3 of~\cite[Lemma~5.4]{da-dy} for
the term involving $\widehat F$.)
Therefore, we can replace $Q$ by any quantization of $q$ in~\eqref{e:main-3}. 
Using~\eqref{e:positive2}, choose $c_1$
small enough so that
$$
q\geq c_1>0\text{ everywhere}.
$$
Formally speaking, \eqref{e:main-3} is a version of the sharp G\r arding inequality,
however the symbol involved is exotic and grows like $\mathcal O(\log(1/h))$, therefore we have to
break it into pieces using a partition of unity.
Note that, with a correct choice of $B_E$, by~\eqref{e:hpm} we have
$$
\begin{gathered}
\supp(\chi_{\hat f})\cap\supp(1-\chi_{\hat f})
\subset (U'_0\cap\{1/2<|\xi|<2\})\setminus (U_0\cap p^{-1}(1))\\
\subset \{H_pm>0\}\cup (T^*X\setminus p^{-1}(1))
\subset\{H_pm>0\}\cup \{\sigma(B_E)\neq 0\}.
\end{gathered}
$$
Therefore, we can write $T^*X=\Omega_0\cup\Omega_1\cup\Omega_2$, where $\Omega_j$ are open and
$$
\begin{gathered}
\chi_{\hat f}=1\text{ near }\Omega_0,\\
-MH_pm+|\sigma(B_E)|^2\geq c>0\text{ on }\Omega_1,\\
\chi_{\hat f}=0\text{ near }\Omega_2.
\end{gathered}
$$
We also make $\Omega_0$ and $\Omega_1$ bounded sets.
Now, take a partition of unity
$$
1=\chi_0+\chi_1+\chi_2,\quad
\chi_j\in C^\infty(T^*X;[0,1]),\quad
\supp\chi_j\subset\Omega_j.
$$
We use this partition to decompose $q$ into a sum of three symbols, except that the term
$-\chi_{\hat f}H_p\hat f+\tilde a$ will be put entirely into the part corresponding to $\Omega_0$.
Namely, put
$$
\begin{gathered}
q_0:=\chi_0 q+(1-\chi_0)(-\chi_{\hat f}H_p\hat f+\tilde a)+\chi_1|\sigma(B_E)|^2,\\
q_1:=\chi_1(q+\chi_{\hat f}H_p\hat f-\tilde a-|\sigma(B_E)|^2),\\
q_2:=\chi_2q.
\end{gathered}
$$
Since $\chi_2(\chi_{\hat f}H_p\hat f-\tilde a)=0$, we have
$$
q=q_0+q_1+q_2.
$$
Since $q_2\in \log(1/h)S^{0+}$, the sharp G\r arding inequality \cite[Theorem~9.11]{e-z} implies
\begin{equation}
  \label{e:positive-q2}
\langle q_2^w u,u\rangle\geq -Ch\log(1/h)\|u\|_{L^2}^2.
\end{equation}
Next, we consider the term corresponding to $q_1\in\log(1/h)S^{\comp}_{1/2}$, which we write
as
$$
q_1=\chi_1(\log(1/h)(-MH_pm+|\sigma(B_E)|^2)-\hat fH_p\chi_{\hat f}-H_pG-2c_1-|\sigma(B_E)|^2).
$$
Since $-MH_pm+|\sigma(B_E)|^2>0$ on $\Omega_1$,
we can increase $M$ and $|\sigma(B_E)|$ to make $q_1\geq c\log(1/h)\chi_1$. We will show that
\begin{equation}
  \label{e:positive-q1}
\langle q_1^wu,u\rangle\geq -Ch\log(1/h)\|u\|_{L^2}^2.
\end{equation}
Note that $q_1 + \chi_1 \hat fH_p\chi_{\hat f} \in \log(1/h) S^0$. To exploit this, put
$$
\Omega=T^*X\setminus(\Omega_0\cup\Omega_2),
$$
so that $\Omega$ is a compact set contained in $\Omega_1$. Since $\chi_1=1$ on $\Omega$,
we find $q_1\geq c\log(1/h)>0$ there.
Therefore, there exists $\chi_\Omega\in C_0^\infty(T^*X)$ such that $\chi_\Omega\neq 0$
near $\Omega$, but $q_1\geq \log(1/h)|\chi_\Omega|^2$ everywhere. We now apply the 
sharp G\r arding inequality for the $\Psi_{1/2}$ calculus, which follows from
the usual sharp G\r arding inequality by the standard rescaling (see for
example the proof of~\cite[Lemma~3.5]{sj-z}),
to the symbol 
$$ q_1-\log(1/h)|\chi_\Omega|^2\in\log(1/h)S_{1/2}^{\comp} . $$ 
Since the only exotic term in $q_1$
is $-\chi_1\hat f H_p\chi_{\hat f}$, supported in $\Omega$, we have
$$
\langle q_1^wu,u\rangle-\log(1/h)\|\chi_\Omega^wu\|_{L^2}^2\geq 
-Ch\log(1/h)\|u\|_{L^2}^2-C\tilde h\log(1/h)\|\chi_\Omega^w u\|_{L^2}^2.
$$
For $\tilde h$ small enough, this yields \eqref{e:positive-q1}.
Now, we write $q_0$ as a sum of two terms, one non-exotic and one compactly microlocalized:
$$
\begin{gathered}
q_0=q'_0+q''_0,\\
q'_0=\chi_0(-H_pG-M\log(1/h)H_pm+(\log(1/h)-1)|\sigma(B_E)|^2),\\
q''_0=-\chi_{\hat f}H_p\hat f+\tilde a-2\chi_0 c_1+(1-\chi_2)|\sigma(B_E)|^2.
\end{gathered}
$$
Then $q'_0\in \log(1/h)S^{0}$ and $q'_0\geq 0$ everywhere
(increasing $|\sigma(B_E)|$ if necessary to handle the set $\{|\xi|\leq 1/2\}$);
by sharp G\r arding inequality~\cite[Theorem~4.32]{e-z} applied
to the symbol $q'_0/\log(1/h)$,
\begin{equation}
  \label{e:positive-q0'}
\langle (q'_0)^wu,u\rangle\geq -Ch\log(1/h)\|u\|_{L^2}^2.
\end{equation}
Next, $q''_0\in S^{\comp}_{1/2}$ and, if we choose the function $\chi_a$
from the definition~\eqref{e:tilde-a} of $\tilde a$ so that $\chi_a=1$ near
$\supp\chi_0$, and take $c_1$ small enough, we have $q''_0\geq 0$ everywhere.
Again using the sharp G\r arding inequality for $\Psi_{1/2}$ calculus,
we find
\begin{equation}
  \label{e:positive-q0''}
\langle (q''_0)^wu,u\rangle\geq -C\tilde h\|u\|_{L^2}^2.
\end{equation}
Adding together~\eqref{e:positive-q2}, \eqref{e:positive-q1},
\eqref{e:positive-q0'}, and~\eqref{e:positive-q0''}, we get~\eqref{e:main-3}.

\smallsection{Acknowledgements} 
We are grateful to Fr\'ed\'eric Faure for helpful comments, in
particular on the optimality of  polynomial bounds, and
to an anonymous referee for useful remarks.
We also would like to acknowledge partial
support by the National Science Foundation from a postdoctoral
fellowship (KD) and 
the grant DMS-1201417 (SD, MZ). 
%


\def\arXiv#1{\href{http://arxiv.org/abs/#1}{arXiv:#1}}

\end{document}